\def\mystyle{}
\begin{document}

\title{Grey subsets of Polish spaces}

\author{Itaï \textsc{Ben Yaacov}}

\address{Itaï \textsc{Ben Yaacov} \\
  Université Claude Bernard -- Lyon 1 \\
  Institut Camille Jordan, CNRS UMR 5208 \\
  43 boulevard du 11 novembre 1918 \\
  69622 Villeurbanne Cedex \\
  France}

\urladdr{\url{http://math.univ-lyon1.fr/~begnac/}}

\author{Julien \textsc{Melleray}}

\address{Julien \textsc{Melleray} \\
  Université Claude Bernard -- Lyon 1 \\
  Institut Camille Jordan, CNRS UMR 5208 \\
  43 boulevard du 11 novembre 1918 \\
  69622 Villeurbanne Cedex \\
  France}

\urladdr{\url{http://math.univ-lyon1.fr/~melleray/}}

\thanks{Research supported by the Institut Universitaire de France and ANR contract GruPoLoCo (ANR-11-JS01-008).}

\thanks{We wish to thank Alexander Berenstein for several helpful discussions as well as Todor Tsankov for valuable comments.}

\svnInfo $Id: SmallDensity.tex 1903 2014-09-04 11:47:41Z begnac $
\thanks{\textit{Revision} {\svnInfoRevision} \textit{of} \today}

%\date{\today}
\keywords{topometric group; grey subset; grey subgroup; small index property; small density property}
\subjclass[2010]{22A05; 03E15}

\begin{abstract}
  We develop the basics of an analogue of descriptive set theory for functions on a Polish space $X$.
  We use this to define a version of the small index property in the context of Polish topometric groups, and show that Polish topometric groups with ample generics have this property. We also extend classical theorems of Effros and Hausdorff to the topometric context.
\end{abstract}

\maketitle

\tableofcontents

\section*{Introduction}

Work on this paper initially started as a follow-up to \cite{BenYaacov-Berenstein-Melleray:TopometricGroups}, in which  we introduced the notion of a \emph{Polish topometric group}, and defined a notion of \emph{ample generics} in that context.

Recall that a \emph{Polish metric structure} $\mathcal M$ is a complete, separable metric space $(M,d)$, along with a family $(R_i)_{i \in I}$ such that each $R_i$ is a uniformly continuous map from some $M^{n_i}$ to $\bR$.
(One may also allow symbols $f_j$ for uniformly continuous functions from $M^{m_j}$ to $M$.
However, any such function is equally well coded by $(\bar x,y) \mapsto d\bigl( f_j(\bar x),y \bigr) \in \bR$, so no generality is gained.
In particular, a constant, or zero-ary function, $c \in M$, is coded by $d(\cdot,c)$.)
The automorphism group $\Aut(\cM)$ is made up of all the isometries of $(M,d)$ which preserve all the relations $R_i$.
When endowed with the pointwise convergence topology $\tau$, $\Aut(\mathcal M)$ is a Polish group.
One can also consider the metric of uniform convergence $\partial$ defined by
\begin{gather*}
  \partial(g,h)= \sup \bigl\{ d(gm,hm) : m \in M \bigr\}.
\end{gather*}
This metric $\partial$ is complete, bi-invariant, and in general not separable.
It also refines $\tau$, and is $\tau$-lower semi-continuous, i.e., the sets $\{(g,h) : \partial(g,h) \le r\}$ are closed for all $r$.

With this paradigm in mind, we define a \emph{Polish topometric group} as any triplet $(G,\tau,\partial)$, where $(G,\tau)$ is a Polish group and $\partial$ possesses the properties cited above.
Automorphism groups of Polish metric structures, and therefore Polish topometric groups, are ubiquitous in analysis.
Of particular interest to us are the unitary group $\mathcal U(\ell_2)$ of a complex separable infinite-dimensional Hilbert space, the group $\Aut(\lambda)$ of  all measure-preserving isomorphism of a standard probability space, and the isometry group of the Urysohn space $\bU$.
The first two structures in that list in particular have natural, and well-studied, topometric structures: for instance, the topometric structure on $\mathcal U(\ell_2)$ is given by the strong operator topology and the operator norm (since $\ell_2$ is unbounded, the uniform convergence metric is calculated on its unit ball).
The notion of ample generics for Polish topometric groups involves an interplay between the topological and metric structures, and it was shown that all three examples above have ample generics.
This was applied to show that $\Aut([0,1],\lambda)$, endowed with its usual Polish topology, has the automatic continuity property, i.e., any group homomorphism with source $\Aut([0,1],\lambda)$ and taking values in a separable topological group must be continuous. This automatic continuity theorem was given a shorter proof, which also involves the use of the uniform metric and the pointwise convergence topology, by Malicki in \cite{Malicki:LebesgueAutomorphismGroup}, where the small index property of $\Aut([0,1],\lambda)$ is also investigated.
Soon after \cite{BenYaacov-Berenstein-Melleray:TopometricGroups} Tsankov \cite{Tsankov:UnitaryAutomaticContinuity} used the existence of ample generics for $\mathcal U(\ell_2)$ to show that it, too, has the automatic continuity property. We should note here that, very recently, Sabok \cite{Sabok:AutomaticContinuity} gave new proofs of automatic continuity theorems for $\Aut([0,1],\lambda)$ and $\mathcal U(\ell_2)$ which do not involve topometric structure; his technique also allowed him to prove that the isometry group of the Urysohn space satisfies the automatic continuity property.

Further study of topometric groups, and of ample generics in this context, leads to an interesting phenomenon, or obstacle, namely that results which would, in the usual Polish group context, be about \emph{sets}, are naturally formulated in the topometric context as results about \emph{functions}.
The reason behind this phenomenon is that in the presence of a metric, one no longer asks whether two things are equal or not, but rather, how far apart they are.
Thus, for example, where one would consider a set of the form $\bigl\{ x: f(x) = g(x) \bigr\}$, we often find ourselves considering the function $x \mapsto d\bigl( f(x), g(x) \bigr)$ (or $\partial\bigl( f(x), g(x) \bigr)$, depending on what the objects in question are).
Similarly, a point $x$ is naturally represented by the function $d(x,\cdot)$.
It is not always immediately clear, however, how statements should be translated from the context of sets to that of functions.

In the beginning of this paper we develop the basics of descriptive set theory in that context, where sets are replaced by \emph{grey sets}, i.e., functions with values in $[0,+\infty]$.
A rule of thumb which every such extension should satisfy is that, in the particular case of functions only taking the values $0$ and $+\infty$, it should just boil down to the usual statement regarding sets (where a set $A$ is represented by its zero-indicator $\bzero_A$, see below).
The first (and easy) task is to figure out the analogues of open sets, closed sets, meagre sets, and so on, after which we extend a few basic results of descriptive set theory to the grey setting.
Having thus argued that such an approach can be made to work, we focus on the case of Polish groups, introducing in particular a semi-group structure on grey sets.
Using this semi-group structure, one can see that the analogue of a subgroup in our context (i.e, a \emph{grey subgroup}) is a \emph{semi-norm} on $G$, i.e., a map $H$ such that
\begin{itemize}
\item $H(1)=0$,
\item $\forall g \in G \ H(g)=H(g^{-1})$,
\item $\forall g,g' \in G \ H(gg') \le H(g)+H(g')$.
\end{itemize}

Such a function $H$ naturally defines a left-invariant pseudo-metric $d_H$ on $G$, defined by $d_H(g,h)=H(g^{-1}h)$, and the \emph{index} of $H$ is simply the density character of the metric space obtained when identifying points $g,h$ such that $d_H(g,h)=0$.
A ``grey'' analogue of the small index property would then be: whenever $H$ is a left-invariant pseudo-metric on $G$ with density character $< 2^{\aleph_0}$, $H$ must be continuous with respect to $\tau$.
This version implies immediately the classical version of the small index property, as well as automatic continuity, and in fact can be shown to be equivalent to the latter.
A topometric version thereof must take $\partial$ into account (or else be too strong), and one is led to the following statement, which is one of our main results.

\begin{quotethm}{thm:SmallDensity}
  Assume that $(G,\tau,\partial)$ is a Polish topometric group with ample generics, and that $H$ is a semi-norm on $G$ which is Baire-measurable with respect to $\partial$ and has density character $<2^{\aleph_0}$.
  Then $H$ is continuous with respect to $\tau$.
\end{quotethm}

We say that a topometric group satisfying the conclusion of the above theorem has the \emph{small density property}.
It is important to note that the above result can also be obtained from one of the main results of \cite{BenYaacov-Berenstein-Melleray:TopometricGroups} and in that sense is not new.
That said, the version presented here implies the said result of \cite{BenYaacov-Berenstein-Melleray:TopometricGroups} in a trivial manner, and moreover, the ``grey approach'' renders possible a more streamlined proof.
We therefore contend that the grey approach is a better presentation of the theory.

We then use our techniques to establish a topometric version of a theorem due to Effros, which states (among other equivalent conditions) that, if $G$ is a Polish group acting continuously on a Polish space $X$ and $x \in X$ has a dense orbit, then $G  x$ is co-meagre if and only if the map $g \mapsto g  x$ is open from $G$ to $G  x$.
One of the implications in this theorem is a direct consequence of Hausdorff's theorem stating that an open, metrisable image of a Polish space is Polish.
After obtaining a topometric version of Hausdorff's theorem (\autoref{thm:Hausdorff} below), we establish the following result, the proof of which requires the machinery of grey sets.

\begin{quotethm}{thm:EffrosTopometric}
  Let $(X,\tau,\partial)$ be a Polish topometric space and $G$ a Polish group acting continuously on $X$ by $\tau$-homeomorphisms and $\partial$-isometries.
  Assume that $(U)_{\partial<r} =\{x \in X : \exists u \in U \ \partial(x,u) < r \}$ is open in $X$ for any open subset $U$ of $G$ and any $r>0$, and that $x \in X$ is such that $G  x$ is dense. Then the following conditions are equivalent:
  \begin{enumerate}
  \item $\overline{G  x}^{\partial}$ is $G_{\delta}$.
  \item $\overline{G  x}^{\partial}$ is co-meagre.
  \item For any open subset $U$ of $G$ and any $r >0$, $(U  x)_{\partial<r}$ is open in $\overline{G  x}^{\partial}$.
  \item There exists $y \in \overline{G  x}^{\partial}$ such that, for any open subset $U$ of $G$ and any $r >0$, $(U  y)_{\partial<r}$ is open in $G  y$.
  \end{enumerate}
\end{quotethm}

Note that the condition that $(U)_{\partial<r}$ is open in $X$ for any open $U$ and any $r >0$ is always satisfied when $G$ is a Polish group and $X=G^n$ equipped with its natural topometric structure; hence, the previous theorem applies in particular to the case when $G$ acts on $G^n$ by diagonal conjugation.
Also, since $(U  x)_{\partial<r}=U  (\{x\})_{\partial<r}$, this result is indeed an illustration of the fact that, when moving to the topometric setting, one replaces the notion of ``point'' by that of ``point up to a small, uniform error'', which is the information contained in the distance function to the point.

\section{Grey sets}
We recall that the classical setting for descriptive set theory is that of \emph{Polish spaces}, i.e., separable metrisable topological spaces whose topology is induced by a complete metric, or more generally that of completely metrisable topological spaces.
Our aim is to ``do some topology'', or descriptive set theory, where instead of considering subsets of a topological space $X$ we consider functions on $X$, say valued in $[0,\infty]$.

\begin{dfn}
  \label{dfn:GreySet}
  By a \emph{grey subset} of $X$, denoted $A \sqsubseteq X$, we mean a function $A \colon X \to [0,\infty]$.

  If $A$ and $B$ are two grey subsets of $X$ then we say that $A \sqsubseteq B$, or $A \geq B$, if $A(x) \geq B(x)$ for all $x$.
  Consequently, we write $\bigsqcap A_i$ for $x \mapsto \sup A_i(x)$, and similarly for $\bigsqcup A_i$, $A \sqcap B$, and $A \sqcup B$.
  We say that $A \sqsubseteq^* B$ (respectively, $A =^* B$) if $A(x) \geq B(x)$ (respectively, $A(x) = B(x)$) outside a meagre set.
\end{dfn}

\begin{ntn}
  \label{ntn:ZeroIndicator}
  For any set $A \subseteq X$ we define its \emph{zero-indicator}
  \begin{gather*}
    \bzero_A(x) =
    \begin{cases}
      0 & x \in A, \\
      \infty & x \notin A.
    \end{cases}
  \end{gather*}
\end{ntn}

An ordinary set $A \subseteq X$ can be viewed as a grey set by identifying it with its zero-indicator.

\subsection{Basic topology}

We start with some basic topological notions.
These reduce fairly easily to the corresponding notions regarding ordinary sets, via the following notation.

\begin{ntn}
  Given a function $\varphi\colon X \to [-\infty,\infty]$ and $r \in \bR$, we define $\varphi_{<r} = \bigl\{ x: \varphi(x) < r \bigr\}$, and similarly for $\varphi_{\leq r}$, etc.
\end{ntn}

Recall that a function $\varphi\colon X \to [-\infty,\infty]$ on a topological space $X$ is \emph{upper (respectively, lower) semi-continuous} if the set $\varphi_{<r}$ (respectively, $\varphi_{>r}$) is open for all $r \in \bR$.

% \begin{lem}
%   \label{lem:InfUSC}
%   Let $\Phi$ be a family of upper semi-continuous functions on a topological space $X$.
%   Then $\inf \Phi \colon x \mapsto \inf \{f(x) \colon f \in \Phi\}$ is upper semi-continuous as well.

%   If $X$ admits a countable base then there exists a countable sub-family $\Phi_0 \subseteq \Phi$ such that $\inf \Phi = \inf \Phi_0$.
% \end{lem}
% \begin{proof}
%   The first assertion is quite standard.
%   For the second, let $\cB$ be a countable base for $X$.
%   We let $\Psi$ consist of all functions of the form $q + \bzero_U$ where $q \in \bQ$ and $U \in \cB$.
%   Then $\Psi$ is a countable family of upper semi-continuous functions on $X$, and every upper semi-continuous function $\varphi$ is the infimum of those $\psi \in \Psi$ greater than $\varphi$.

%   Now, choose $\Phi_0 \subseteq \Phi$ countable such that for each $\psi \in \Psi$, if there is $\varphi \in \Phi$ such that $\varphi \leq \psi$ then there is such $\varphi$ in $\Phi_0$ as well.
%   Then $\inf \Phi = \inf \Phi_0$.
% \end{proof}

\begin{dfn}
  \label{dfn:GreySetTopology}
  Let $A \sqsubseteq X$ be a grey set.
  \begin{enumerate}
  \item
    We say that $A$ is \emph{open} (in symbols, $A \sqsubseteq_o X$) if it is upper semi-continuous as a function.
    We say that it is \emph{closed} ($A \sqsubseteq_c X$) if it is lower semi-continuous, and \emph{clopen} if it is both, namely continuous.
  \item
    We say that $A$ is \emph{$G_\delta$} if $A_{\leq r}$ is a $G_\delta$ set for all $r$.
  \item
    We say that $A$ is \emph{meagre} if $A_{<r}$ is meagre for some $r > 0$, and that it is \emph{co-meagre} if $A_{<r}$ is co-meagre for all $r > 0$, i.e., if $A_{=0}$ is co-meagre.
  \end{enumerate}
\end{dfn}

We leave it as an easy exercise for the reader to check that a grey $A$ is $G_\delta$ if and only if it is equal to $\bigsqcap O_n$ for a countable family of open grey sets $O_n$.
Also, it is clear that if $A_{<r}$ is $G_\delta$ for all $r$ then $A$ is $G_\delta$, but the converse is false.
Indeed, let $B\colon [0,1] \rightarrow [0,1]$ assign $1/m$ to a reduced rational $n/m$ and $0$ to all irrationals, and let $A = 1-B$.
Then $A_{\leq r}$ is either finite or all of $[0,1]$, and in any case $G_\delta$, but $A_{<1} = \bQ \cap [0,1]$ is not.

\begin{conv}
  \label{conv:InfinityMinusInfinity}
  When doing arithmetic in $[0,\infty]$, let us agree that $\infty - \infty = 0$.
  Recall also that $t \dotminus s = \max (0,t-s)$, which we extend by this convention to all $t,s \in [0,\infty]$.
\end{conv}

With this convention, $A \sqsubseteq^* B$ (respectively, $A =^* B$) if and only if $B \dotminus A$ (respectively, $|A-B|$) is co-meagre.

For example, $\bzero_A$ is open, i.e., upper semi-continuous (respectively, closed, i.e., lower semi-continuous) if and only if $A$ is open (respectively, closed), $\bzero_A \sqsubseteq \bzero_B$ if and only if $A \subseteq B$, and so on.
For grey sets, we have $A \sqsubseteq X$ if and only if $A \sqsubseteq \bzero_X$, and if $A_i$ are all open (respectively, closed) then so is $\bigsqcup A_i$ (respectively, $\bigsqcap A_i$).

One may restrict grey subsets to values in $[0,1]$ (in which case we define $\bzero_A(x) = 1$ when $x \notin A$), or extend to $[-\infty,\infty]$.
Since these are isomorphic ordered sets, equipped with the order topology, choosing one or the other has essentially no effect on most of our results.

\begin{dfn}
  Let $A \sqsubseteq X$.
  \begin{enumerate}
  \item We define $A^\circ$, the \emph{interior} of $A$ to be the least u.s.c.\ $B$ greater than $A$.
  \item We define $\overline A$, the \emph{closure} of $A$ to be the greatest l.s.c.\ $B$ less than $A$.
  \end{enumerate}
\end{dfn}

One can check that
\begin{gather*}
  \bigsqcup \{ B \sqsubseteq_o X : B \sqsubseteq A \} = A^\circ \sqsubseteq A \sqsubseteq \overline A = \bigsqcap \{ B \sqsubseteq_c X : B \sqsupseteq A \},
  \\
  A^\circ(x) = \limsup_{y \to x} A(y), \qquad \overline A(x) = \liminf_{y \to x} A(y),
  \\
  (A^\circ)_{<r} = (A_{<r})^\circ, \qquad (\overline A)_{\leq r} = \overline{(A_{\leq r})}.
\end{gather*}
It follows immediately that for any family $(A_i)_{i \in I}$ of grey subsets of $X$, one has $\overline{\bigsqcap_i A_i} \sqsubseteq \bigsqcap_i \overline{A_i}$ and $\bigl( \bigsqcup_i A_i \bigr)^\circ \sqsupseteq \bigsqcup_i A_i^\circ$, and similarly $\overline{A \sqcup B} = \overline A \sqcup \overline B$, $(A \sqcap B)^\circ = A^\circ \sqcap B^\circ$.

Another notion which transfers without much pain to the grey context is Baire measurability.
Throughout the rest of this section, let $X$ denote a completely metrisable topological space (not necessarily separable).
For all the basic facts and theorems of descriptive set theory we use below, we refer the reader to Kechris \cite{Kechris:Classical}.
Whenever $A,B \subseteq X$ and $A \setminus B$ is meagre in $X$ we write $A \subseteq^* B$, and if $A \subseteq^* B \subseteq^* A$ then we write $A =^* B$.
A subset $A \subseteq X$ is called \emph{Baire-measurable} if there exists an open set $U$ such that $A =^* U$.
The family of all Baire-measurable sets forms a $\sigma$-algebra which contains all open sets and therefore all Borel sets.
We recall:

\begin{fct}[{see \cite[Theorem~8.29]{Kechris:Classical}}]
  \label{fct:BaireMeasurableSet}
  When $X$ is completely metrisable, we define
  \begin{gather*}
    U(A) = \bigcup \bigl\{ U \text{ open in } X : U \subseteq^* A \bigr\}.
  \end{gather*}
  This is an open set, it is always the case that $A \supseteq^* U(A)$, and $U(A)$ is the largest open set with this property.
  The set $A$ is then Baire-measurable if and only if $A =^* U(A)$, if and only if $A \subseteq^* U(A)$.
\end{fct}

% \begin{lem}
%   \label{lem:MeagreBoundary}
%   Let $O \sqsubseteq_o X$.
%   Then $O =^* \overline{O}$.
% \end{lem}
% \begin{proof}
%   We may replace the interval $[0,\infty]$ with $[0,1]$ (e.g., via $t \mapsto 1- e ^{-t}$), and thus we may define $\Omega = O - \overline O$.
%   Then $\Omega \sqsubseteq_o X$ as well, and we wish to show that $\Omega_{=0} = \bigcap_{\varepsilon > 0} \Omega_{<\varepsilon}$ is a dense $G_\delta$.
%   Indeed, each $\Omega_{<\varepsilon}$ is open, so let us show that it is dense.
%   Now let $U$ be an open set and $x \in U$.
%   Since $\overline O$ is closed, there exists a smaller open set $x \in V \subseteq U$ such that $\overline O\rest_V > \overline O(x) - \varepsilon$.
%   Since $\overline O(x) = \liminf_{y \rightarrow x} O(y)$, there exists $y \in V$ such that $O(y) < \overline O(x) + \varepsilon$, so $O(y) < \overline O(y) + 2\varepsilon$, so $U \cap \Omega_{2\varepsilon} \neq \emptyset$, which is enough.
% \end{proof}

The following is easy and left to the reader.
\begin{lem}
  \label{lem:GreyStarContainment}
  Let $A,B \sqsubseteq X$ be two grey sets.
  Then $A \sqsubseteq^* B$ if and only if $A_{<r} \subseteq^* B_{\leq s}$ for every rational $r<s$, if and only if $A_{<r} \subseteq^* B_{<r}$ ($A_{\leq r} \subseteq^* B_{\leq r}$) for all $r$.
\end{lem}

\begin{lem}
  \label{lem:GreyBaireMeasurable}
  Let $A$ be a grey subset of $X$, and define
  \begin{gather*}
    U(A) = \bigsqcup \bigl\{ O \sqsubseteq_o X: O \sqsubseteq^* A \bigr\}.
  \end{gather*}
  Then $U(A) \sqsubseteq^* A$, $U(A)$ is the least u.s.c.\ function (namely, $\sqsubseteq$-greatest open grey set) with this property, and the following are equivalent:
  \begin{enumerate}
  \item We have $A \sqsubseteq^* U(A)$.
  \item We have $A =^* U(A)$.
  \item There exists an open grey set $B$ such that $A =^* B$.
  \item As a function, $A$ is Baire-measurable.
  \end{enumerate}
\end{lem}
\begin{proof}
  We make free use of \autoref{fct:BaireMeasurableSet} and \autoref{lem:GreyStarContainment}.
  Let $U'(A)$ be the open grey set defined by $U'(A)_{<r} = \bigcup_{s<r} U(A_{<s})$, and let us show that $U'(A) = U(A)$.
  Indeed, for each $r$ we have $\bigcup_{s<r} U(A)_{<s} = U(A)_{<r} \subseteq^* A_{<r}$, whereby $U(A)_{<r} \subseteq U(A_{<r})$ and therefore $U(A)_{<r} \subseteq U'(A)_{<r}$, i.e., $U(A) \sqsubseteq U'(A)$.
  Conversely, we have $U'(A)_{<r} \subseteq U(A_{<r}) \subseteq^* A_{<r}$, so $U'(A) \sqsubseteq^* A$ and therefore $U'(A) \sqsubseteq U(A)$.
  Thus $U(A) = U'(A) \sqsubseteq^* A$, and it is clearly $\sqsubseteq$-greatest.
  Now,
  \begin{cycprf}
  \item[\impnnext]
    Clear.
  \item[\impnext]
    If such $B$ exists then $B_{<r} =^* A_{<r}$ for all $r$.
    It follows that $A_{<r}$ is a Baire-measurable set for all $r$, i.e., $A$ is a Baire-measurable function.
  \item[\impfirst]
    Assume that $A_{<r}$ is a Baire-measurable set for all $r$.
    Then $A_{<r} \subseteq^* U(A_{<r})$, and therefore $A_{<r} = \bigcup_{s<r} A_{<s} \subseteq^* \bigcup_{s<r} U(A_{<s}) = U(A)_{<r}$.
    It follows that $A \sqsubseteq^* U(A)$.
  \end{cycprf}
\end{proof}

Notice that if $A$ is meagre then $\inf U(A) > 0$.
If $A$ is, in addition, Baire-measurable, then by \autoref{lem:GreyBaireMeasurable} the converse holds as well.

\begin{lem}
  \label{lem:GreyGdelta}
  A grey set $B \sqsubseteq X$ is $G_\delta$ if and only if there exists a countable family of $O_n \sqsubseteq_o X$ such that $B = \bigsqcap_n O_n$.
  Moreover, if $A$ and $B$ are open or closed then $A + B$ and $A \dotminus B$ are $G_\delta$.
\end{lem}
\begin{proof}
  Assume that $B$ is $G_\delta$.
  For each rational $r > 0$, let $B_{\leq r} = \bigsqcap_n O_{r,n}$, where $O_{r,n} \subseteq X$ are open, and let $O'_{r,n} = \bzero_{O_{r,n}} + r \sqsubseteq_o X$.
  Then $B = \bigsqcap_{r,n} O'_{r,n}$.
  The converse implication is immediate.

  For the moreover part, observe that
  \begin{gather*}
    (A+B)_{\leq r} = \bigcap_{s+t > r} A_{\leq s} \cup B_{\leq t}, \qquad (A \dotminus B)_{\leq r} = \bigcap_{s \dotminus t > r} A_{\leq s} \cup B_{\geq t},
  \end{gather*}
  and similarly with any of $\leq$, $\geq$ replaced with $<$, $>$.
\end{proof}

\subsection{Relative topology}

We now turn to somewhat more involved aspects of grey set topology, which \emph{do not} reduce to the corresponding properties of sets, namely closure and interior relative to a grey superset.

\begin{dfn}
  \label{dfn:GreyRelativeClosure}
  For $A \sqsubseteq B \sqsubseteq X$, we define the \emph{relative closure} of $A$ in $B$ as $\overline A_B = \overline A \sqcap B$.
  We say that \emph{$A$ is (relatively) closed in $B$}, in symbols $A \sqsubseteq_c B$, if $A = \overline A_B$.
\end{dfn}

\begin{lem}
  \label{lem:GreyRelativeClosure}
  Let $B \sqsubseteq X$.
  \begin{enumerate}
  \item If $C \sqsubseteq_c X$ then $C \sqcap B \sqsubseteq_c B$.
  \item If $(A_i)_{i \in I}$ is a family of relatively closed grey subsets of $B$, then $\bigsqcap_i A_i$ is also closed in $B$.
  \item The grey set $\overline A_B$ is the $\sqsubseteq$-least relatively closed grey subset of $B$ containing $A$.
  \end{enumerate}
\end{lem}
\begin{proof}
  For the first item, $C \sqcap B \sqsubseteq \overline{C \sqcap B} \sqcap B \sqsubseteq \overline C \sqcap \overline B \sqcap B = C \sqcap B$.
  For the second, let $D = \bigsqcap A_i$.
  Then $D = D \sqcap B \sqsubseteq \overline D \sqcap B \sqsubseteq \bigsqcap (\overline A_i \sqcap B) = D$.
  For the third item, we have $\overline A_B \sqsubseteq_c B$ by the first item, and if $A \sqsubseteq C \sqsubseteq_c B$ then $\overline A_B \sqsubseteq \overline C_B = C$.
\end{proof}

\begin{dfn}
  For $A \sqsubseteq B \sqsubseteq X$, we say that \emph{$A$ is dense in $B$} if $\overline A_B=B$ (equivalently, if $\overline A \sqsupseteq B$, which is further equivalent to $\overline A= \overline B$).
\end{dfn}

\begin{lem}
  \label{lem:RelativeDensity}
  For any $A \sqsubseteq B \sqsubseteq X$, $A$ is dense in $B$ if, and only if, for any open $U \sqsubseteq X$, we have $\inf (B + U) \geq \inf (A + U)$.
\end{lem}
\begin{proof}
  The latter condition is easily checked to be equivalent to $\overline A \sqsupseteq \overline B$.
\end{proof}

We turn to the definition of relative interior inside a grey set, which may seem less straightforward than the definition of relative closure.
In particular, it is not obtained from the latter by mere passage to the complement (indeed, there is no notion of relative complement of a grey subset).
We leave it to the reader to check that for ordinary sets (identified with their zero-indicator functions) this gives the usual notions.
While there are other possible definitions which pass this test, and even verify \autoref{lem:GreyRelativeInterior} (possibly with the exception of the first item), it is this definition which allow us to prove the relative Baire Category Theorem below.
Let us start with a few useful technical results

\begin{lem}
  \label{lem:InequalityInteriorClosure1}
  For any $A, B \sqsubseteq X$ one has $\overline{A+B} \sqsupseteq \overline A + B^\circ$.
\end{lem}
\begin{proof}
  Recall that by \autoref{conv:InfinityMinusInfinity},
  \begin{gather*}
    t \dotminus s
    =
    \begin{cases}
      t - s & \text{if } s < t, \\
      0 & \text{otherwise}.
    \end{cases}
  \end{gather*}
  It follows that the grey set $\overline{A+B} \dotminus B^\circ$ is closed, and $\overline{A+B} \dotminus B^\circ \leq A$, i.e., $A \sqsubseteq \overline{A+B} \dotminus B^\circ$.
  Then $\overline A \sqsubseteq \overline{A+B} \dotminus B^\circ$ and therefore
  \begin{gather*}
    \overline{A+B} \sqsupseteq \overline{A+B} \dotminus B^\circ + B^\circ \sqsupseteq \overline A + B^\circ,
  \end{gather*}
  as desired.
\end{proof}

\begin{lem}
  \label{lem:InequalityInteriorClosure2}
  For any $A \sqsubseteq B \sqsubseteq X$, one has $A^\circ_B \sqsupseteq A^\circ$, or equivalently, $(A-B)^\circ + \overline B \sqsupseteq A^\circ$.
\end{lem}
\begin{proof}
  Assume first that $A$ and $B$ are bounded, and let $C = B-A + \sup A$.
  Then the desired inequality is equivalent to $\overline{C+A} \sqsupseteq \overline C + A^\circ$ which follows from \autoref{lem:InequalityInteriorClosure1}.
  For the general case, truncate $A$ and $B$ at some $M > 0$, and let $M \rightarrow \infty$.
\end{proof}

\begin{dfn}
  \label{dfn:GreyRelativeInterior}
  For $A \sqsubseteq B \sqsubseteq X$, we define the \emph{relative interior} of $A$ in $B$ as $A^\circ_B = \bigl( (A-B)^\circ + \overline B \bigr) \sqcap A$, where we recall that by convention $\infty-\infty = 0$.
  We say that \emph{$A \sqsubseteq B$ is (relatively) open in $B$}, in symbols $A \sqsubseteq_o B$, if $A = A^\circ_B$, or equivalently, if $A \sqsubseteq (A-B)^\circ + \overline B$.
\end{dfn}

\begin{lem}
  \label{lem:GreyRelativeInterior}
  Let $B \sqsubseteq X$.
  \begin{enumerate}
  \item If $A \sqsubseteq_o X$ then $A+B \sqsubseteq_o B$.
    If, in addition, $A \sqsubseteq B$ then $A \sqsubseteq_o B$.
  \item If $(A_i)_{i \in I}$ is a family of relatively open grey subsets of $B$, then $\bigsqcup_i A_i$ and $A_0 \sqcap A_1$ are also open in $B$.
  \item Assume $A \sqsubseteq B$.
    Then the grey set $A^\circ_B$ is the $\sqsubseteq$-greatest grey subset of $A$ relatively open in $B$.
    In particular, $A^\circ_B \sqsupseteq A^\circ$.
  \end{enumerate}
\end{lem}
\begin{proof}
  The first item is immediate from the definitions and \autoref{lem:InequalityInteriorClosure2}.
  For the second, let $D = \bigsqcup A_i$.
  Then $(D - B)^\circ + \overline B \sqsupseteq \bigsqcup (A_i - B)^\circ + \overline B \sqsupseteq D$.
  Similarly, $\bigl( (A_0 \sqcap A_1) - B \bigr)^\circ + \overline B = (A_0 - B)^\circ \sqcap (A_1 - B)^\circ + \overline B = \bigl( (A_0 - B)^\circ + \overline B \bigr) \sqcap \bigl( (A_1 - B)^\circ + \overline B \bigr) \sqsupseteq A_0 \sqcap A_1$.

  For the third item, let us check that $A^\circ_B \sqsubseteq_o B$.
  Indeed,
  \begin{align*}
    (A^\circ_B - B)^\circ + \overline B
    &
    = \Bigl[ \bigl[ \bigl( (A-B)^\circ + \overline B \bigr) \sqcap A \bigr] - B \Bigr]^\circ + \overline B
    \\ &
    \sqsupseteq \Bigl[ (A-B)^\circ \sqcap (A-B) \Bigr]^\circ + \overline B
    \\ &
    = (A-B)^\circ + \overline B \sqsupseteq A^\circ_B.
  \end{align*}
  Assume now that $C \sqsubseteq_o B$ and $C \sqsubseteq A$.
  Then
  \begin{gather*}
    A^\circ_B
    = \bigl( (A-B)^\circ + \overline B \bigr) \sqcap A
    \sqsupseteq \bigl( (C-B)^\circ + \overline B \bigr) \sqcap C
    = C.
  \end{gather*}
\end{proof}

One should be careful, though: for example, $A \sqsubseteq_o B \sqsubseteq_o C$ does not imply that $A \sqsubseteq_o C$, and $A \sqsubseteq B \sqsubseteq C$, $A \sqsubseteq_o C$ does not imply $A \sqsubseteq_o B$.

\begin{lem}
  \label{lem:NiceOpen}
  Assume that $A \sqsubseteq_o B$, and let $U = (A-B)^\circ$.
  Then $B + U \sqsubseteq A$ and $B+U \sqsubseteq_o B$.
  Moreover, $B+U$ is dense in $A$, and if $A$ is dense in $B$ then $B+U$ is dense in $B$.
\end{lem}
\begin{proof}
  Clearly $B+U \sqsubseteq A$, and $B+U \sqsubseteq_o B$ since $U$ is open.
  By \autoref{lem:InequalityInteriorClosure1}, and since $A \sqsubseteq_o B$, we have $\overline{B+U} \sqsupseteq U^\circ + \overline B \sqsupseteq A$, so $B+U$ is dense in $A$, and if $A$ is dense in $B$ then so is $B+U$.
\end{proof}

\begin{dfn}
  Let $B \sqsubseteq X$, and let $O_n \sqsubseteq_o B$ for every $n \in \bN$.
  Then we say that $\bigsqcap_n O_n$ is $G_\delta$ in $B$.
  If each $O_n$ is moreover dense in $B$ then any intermediate grey set $\bigsqcap O_n \sqsubseteq A \sqsubseteq B$ is said to be \emph{co-meagre} in $B$.
\end{dfn}

When $B = X$ (i.e., when $B = \bzero_X$), this agrees with our previous definitions.
More generally,

\begin{lem}
  \label{lem:RelativeCoMeagre}
  Let $A \sqsubseteq B \sqsubseteq X$.
  \begin{enumerate}
  \item If $A$ is co-meagre in $B$ then $B \sqsubseteq^* A$ (equivalently, $A =^* B$).
  \item When $B$ is open, the converse holds as well.
  \end{enumerate}
\end{lem}
\begin{proof}
  Assume first that $A$ is co-meagre in $B$.
  By \autoref{lem:NiceOpen} we may assume that $A = B + \bigsqcap U_n$, where each $U_n$ is open, $B+U_n$ is dense in $B$, and $U_n \sqsupseteq U_{n+1}$.
  Fix $\varepsilon > 0$, and consider the ordinary open sets $V_n = (U_n)_{<\varepsilon}$ and $W_n = (X \setminus V_n)^\circ$.
  Since $\overline{C\rest_{W_n}} = \overline C \rest_{W_n}$ for any grey set $C$, we have
  We then have
  \begin{gather*}
    \overline B\rest_{W_n} = \overline{B + U_n}\rest_{W_n} \geq \overline{B+\varepsilon}\rest_{W_n} = \overline B\rest_{W_n} + \varepsilon.
  \end{gather*}
  Thus $\overline B$ and therefore $B$ and $A$ are infinite on $W_n$.
  Since $V_n \cup W_n$ is dense in $X$, the set $S = \bigcap (V_n \cup W_n)$ is co-meagre.
  If $x \in S$, then either $x \in W_n$ for some $n$, in which case $A(x) = B(x) = \infty$, or $x \in V_n$ for all $n$, in which case $U_n(x) = 0$ for all $n$ and $A(x) = B(x)$ as well, proving the first item.

  For the second, assume that $B$ is open, and that $\{x \in X : A(x) = B(x)\} \supseteq S = \bigcap V_n$ where $V_n \subseteq X$ are open and dense.
  Since $B$ is open, $B + \bzero_{V_n}$ is dense in $B$, so $B + \bzero_S$ is co-meagre in $B$, and \textit{a fortiori} so is $A$.
\end{proof}

We can now (state and) prove the Baire Category Theorem for grey sets.
For the proof, one essentially figures how to prove the classical Baire Category Theorem inside a $G_\delta$ subset of a Polish space without first proving that such a subset is a Polish space itself, and transfers the argument (modulo a reduction based on \autoref{lem:NiceOpen}) to the grey setting.

\begin{thm}
  \label{thm:Baire}
  Assume that $(X,d)$ is a complete metric space and $A \sqsubseteq B \sqsubseteq X$, where $B$ is $G_{\delta}$ (in $X$) and $A$ is co-meagre in $B$.
  Then $A$ is dense in $B$.
\end{thm}
\begin{proof}
  We have $B = \bigsqcap O_n$ with $O_n \sqsubseteq_o X$, and we may assume that $A = \bigsqcap A_n$ where each $A_n$ is open and dense in $B$.
  By virtue of \autoref{lem:NiceOpen}, we may assume that $A_n = B + U_n$ with $U_n \sqsubseteq_o X$.
  We may further assume that $O_n = \bigsqcap_{k \leq n} O_k$ and $U_n = \bigsqcap_{k \leq n} U_k$, in which case $A = \bigsqcap_n O_n + U_n$.

  To show that $A$ is dense in $B$, we apply the criterion presented in \autoref{lem:RelativeDensity}.
  Indeed, let $V \sqsubseteq_o X$, and it will be enough to show that if $\inf (B + V) < r$ then $\inf (A + V) < r$.

  Let us now construct by induction a sequence of $V_n \sqsubseteq_o X$ such that $\inf (V_n + B) < r$, starting with $V_0 = V$.
  Given $V_n$, and since $A_n$ is dense in $B$, we have $r > \inf (A_n + V_n)$, so there is $x_n \in X$ such that $r > (A_n + V_n)(x_n) \geq ( O_n + U_n + V_n )(x_n)$.
  For some $M_n > 0$ big enough, define $V_{n+1} = V_n + M_n d(\cdot,x_n)$, which is open as well.
  Then $r > (A_n + V_n)(x_n) = (A_n + V_{n+1})(x_n) \geq (B + V_{n+1})(x_n)$, and the induction may continue.

  Since $O_n + U_n + V_n$ is open, choosing $M_n$ big enough, we have $(O_n + U_n + V)(y) \leq (O_n + U_n + V_n)(y) < r$ whenever $d(y,x_n) < 2r/M_n$.
  We also observe that $d(x_{n+1},x_n) < r/M_n$, so choosing $M_{n+1} > 2M_n$, the sequence $\{x_n\}$ is Cauchy, converging to some $y$ such that $d(y,x_n) < 2 r/M_n$ for all $n$.
  Then $(A + V)(y) < r$, as desired.
\end{proof}

As explained in the introduction, the primary reason why we considered grey sets is that they are useful in the topometric setting.
We refer the reader to \cite{BenYaacov-Berenstein-Melleray:TopometricGroups} for details on topometric spaces and groups.
We simply recall here that a Polish topometric space is a triplet $(Y,\tau,\partial)$ such that $(Y,\tau)$ is a Polish space, $\partial$ is a metric refining $\tau$, and $\partial$ is $\tau$-lower semi-continuous.
We shall follow the convention that in the context of a topometric space $(Y,\tau,\partial)$, the vocabulary of general topology refers to the topology $\tau$, while the vocabulary of metric spaces refers to $\partial$ (unless qualified explicitly otherwise).

We now formulate a generalisation of the classical Kuratowski-Ulam Theorem to this context.
We consider a map $\pi\colon X \rightarrow Y$, where $X$ is a Polish space and $Y$ a topometric space.
For $A \sqsubseteq X$ and $y \in Y$ we need to define the fibres of $X$ and of $A$ over $y$, in a manner which takes into account the topometric structure on $Y$.

\begin{ntn}
  \label{ntn:DistanceThinckening}
  Let $(X,\partial)$ be a metric space.
  For a grey subset $A \sqsubseteq X$ we define the \emph{$\partial$-thickening} $(A)_\partial \sqsubseteq X$ as the greatest $1$-Lipschitz function lying below $A$, i.e.,
  \begin{gather*}
    (A)_\partial(x) = \inf_{x' \in X} \, A(x') + \partial(x,x'),
    \qquad
    (A)_{\partial<r} = \{x : \exists x' \, A(x') + \partial(x,x') < r\}.
  \end{gather*}
  When $A \subseteq X$ is an ordinary set, which we identify with its zero-indicator, we obtain $(A)_\partial = \partial(\cdot,A)$ and $(A)_{\partial<r} = \{x : \partial(x,A) < r\}$.
\end{ntn}

In particular, $A \sqsubseteq (X,\partial)$ is $1$-Lipschitz if and only if $A = (A)_\partial$.

\begin{ntn}
  \label{ntn:MapFibreImage}
  Let $X$ be a topological space, $(Y,\partial)$ a metric space, and $\pi\colon X \rightarrow Y$ a map.
  For $A \sqsubseteq X$ and $y \in Y$ we define the \emph{fibre} of $A$ over $y$, denoted $A_y \sqsubseteq A \sqsubseteq X$, and the \emph{image} of $A$, denoted $\pi A \sqsubseteq Y$, by
  \begin{gather*}
    A_y(x) = A(x) + \partial(\pi x,y),
    \qquad
    (\pi A)(y) = \inf_{\pi x = y} A(x).
  \end{gather*}
\end{ntn}

In particular $X_y(x) = \partial(\pi x,y)$.
Combining the two, $(\pi A)_\partial \sqsubseteq Y$ is the ``infimum over the fibre'', namely,
\begin{gather*}
  (\pi A)_\partial(y) = \inf_x A(x) + \partial( \pi x,y ) = \inf_X \, A_y.
\end{gather*}
We remark that if $\partial$ is the ``trivial'' $0/\infty$ distance then we recover the usual definition of the fibre of a set $A \subseteq X$ over $y$, namely $A_y = A \cap \pi^{-1} y$ (identifying a set with its zero-indicator).
In this case our result below specialises to the classical Kuratowski-Ulam Theorem (formulated for an open map between Polish spaces rather than just for a projection).

\begin{dfn}
  \label{dfn:TopometricOpenMap}
  Let $(X,\tau_X)$ be a topological space, $(Y,\tau_Y,\partial_Y)$ a topometric space.
  We say that a map $\pi \colon X \to Y$ is \emph{topometrically open} if for every open $U \subseteq X$ (equivalently, every $U \sqsubseteq_o X$) we have $(\pi U)_\partial \sqsubseteq_o Y$, i.e., $(\pi U)_{\partial<r} \subseteq Y$ is open for every $r > 0$.

  We say that a topometric space $(Y,\tau,\partial)$ is \emph{adequate} if $\id\colon (Y,\tau) \rightarrow (Y,\tau,\partial)$ is topometrically open.
\end{dfn}

When $\partial$ is trivial this is the same as being open, and in particular $\id\colon (Y,\tau) \rightarrow (Y,\tau,\partial)$ is automatically topometrically open.
In the general case, however, the latter does not always hold.
One checks that:

\begin{lem}
  \label{lem:TopometricOpen}
  Let $(X,\tau,\partial)$ be a topometric space.
  Then the following are equivalent:
  \begin{enumerate}
  \item $X$ is adequate, i.e., the map $\id\colon (X,\tau) \rightarrow (X,\tau,\partial)$ is topometrically open.
  \item For every $1$-Lipschitz $A \sqsubseteq X$, the closure $\overline A$ is $1$-Lipschitz as well.
  \item Same, with interior instead of closure.
  \end{enumerate}
\end{lem}

\begin{thm}
  \label{thm:KuratowskiUlam}
  Let $(Y,\tau,\partial)$ be an adequate Polish topometric space, $X$ a Polish space, and $\pi \colon X \to Y$ a continuous map (in the topology $\tau$, by our convention).
  Assume that $\pi\colon X \rightarrow Y$ is topometrically open, as per \autoref{dfn:TopometricOpenMap}.
  Then the following conditions are equivalent, for a Baire-measurable $A \sqsubseteq X$:
  \begin{enumerate}
  \item \label{item:KuratowskiUlam1} The grey set $A$ is co-meagre in $X$.
  \item \label{item:KuratowskiUlam2} The set $\{y \in Y: A_y \text{ is co-meagre in } X_y \}$ is co-meagre in $Y$.
  \end{enumerate}
\end{thm}
\begin{proof}
  \begin{cycprf}
  \item[\impnext]
    This easily reduces to the case when $A$ is open dense in $X$, so we only consider that case.
    Clearly, $A_y$ is open in $X_y$.
    For $U \sqsubseteq_o X$, define
    \begin{gather*}
      \Omega_U = \bigl( \pi (A + U) \bigr)_\partial - (\pi U)_\partial \sqsubseteq Y,
      \qquad \text{i.e.,} \qquad
      \Omega_U(y) = \inf (A_y + U) -  \inf (X_y + U).
    \end{gather*}
    By our assumption, $\Omega_U$ is the difference of two open grey sets, and is therefore $G_\delta$.
    Consider now any $V \sqsubseteq_o Y$.
    By assumption $(V)_\partial$ is open in $Y$ as well, and so $\pi^{-1} (V)_\partial = (V)_\partial \circ \pi$ is open in $X$.
    Since $A$ is dense,
    \begin{gather*}
      \inf_Y \, \Bigl( \bigl( \pi (A + U) \bigr)_\partial + V \Bigr)
      = \inf_X \, \bigl( A + U + \pi^{-1}(V)_\partial \bigr)
      = \inf_X \, \bigl( U + \pi^{-1}(V)_\partial \bigr) = \inf_Y \bigl( (\pi U)_\partial + V \bigr).
    \end{gather*}
    Thus $\bigl( \pi (A + U) \bigr)_\partial$ is dense in $( \pi U )_\partial$, and it follows that $\Omega_U$ is dense in $Y$.
    Since $X$ has a countable base, $\Omega = \bigsqcap_U \Omega_U$ is co-meagre, and $\Omega_{=0} \subseteq \{y \in Y : A_y \text{ is dense in } X_y \}$.
    Thus the latter is a co-meagre set.
  \item[\impfirst]
    Assume that $A$ is Baire-measurable, satisfies the hypothesis, and yet is not co-meagre, so $U(A)$ is not dense, and neither is $\half U(A)$.
    In other words, there is $V \sqsubseteq_o X$ such that $\inf V < \inf \bigl( V + \half U(A) \bigr)$, and we may assume that $\inf V = 0 < r < \inf \bigl( V + \half U(A) \bigr)$.
    Let $B = U(A) \dotminus A$, which is co-meagre, so by the implication we have already established, the set $C = \{y: A_y \text{ and } B_y \text{ are co-meagre in } X_y\}$ is co-meagre.
    If $y \in C$ then $\half[ A_y + B_y ]$ is co-meagre in $X_y$ as well (or, equivalently, $A_y + B_y$ is co-meagre in $2 X_y$).

    Since $(\pi V)_\partial \sqsubseteq_o Y$, we have $\inf_C (\pi V)_\partial = \inf_Y (\pi V)_\partial = 0$.
    Therefore there exists $y \in C$ such that $r > (\pi V)_\partial(y) = \inf_X (V + X_y)$.
    Since $X_y$ is $G_\delta$ (being closed), $\half[ A_y + B_y ]$ is dense in $X_y$, so
    \begin{gather*}
      r > \inf_X \bigl( V + \half[ A_y  + B_y ] \bigr) \geq \inf_X \bigl( V + \half U(A) \bigr) > r,
    \end{gather*}
    a contradiction.
  \end{cycprf}
\end{proof}

Let us mention an immediate application of this theorem.

\begin{prp}
  \label{prp:CoMeagreEnlargement}
  Let $(X,\tau,\partial)$ be an adequate Polish topometric space.
  Assume that $A \sqsubseteq U \sqsubseteq X$, where $U$ is open and $A$ is co-meagre in $U$.
  Then $(A)_\partial$ is co-meagre in $(U)_\partial$.

  In particular, if $A \sqsubseteq X$ is $1$-Lipschitz (relative to $\partial$), then $U(A)$ is also $1$-Lipschitz.
\end{prp}
\begin{proof}
  Let $B = A - U$.
  Then $B$ is co-meagre in $X$, which means in particular that it is Baire-measurable.
  By \autoref{thm:KuratowskiUlam} applied to $\id \colon X \to X$, the set $C = \{x \in X : B_x \text{ is co-meagre in } X_x\}$ is co-meagre in $X$.
  For $x \in C$ we have
  \begin{gather*}
    (U)_\partial(x) = \inf U_x = \inf (U + X_x) = \inf (U + B_x) = \inf A_x = (A)_\partial(x).
  \end{gather*}
  By \autoref{lem:RelativeCoMeagre}, $(A)_\partial$ is co-meagre in $(U)_\partial$.

  Now, assume that $A \sqsubseteq X$ is $1$-Lipschitz, equivalently $A=(A)_\partial$.
  By definition of $U(A)$ we have $U(A) = ^* A \sqcap U(A)$.
  By \autoref{lem:RelativeCoMeagre}, $A \sqcap U(A)$ is co-meagre in $U(A)$ and by the above $\bigl( U(A) \sqcap A \bigr)_\partial$ is co-meagre in $\bigl( U(A) \bigr)_\partial$.
  By \autoref{lem:RelativeCoMeagre} again,
  \begin{gather*}
    \bigl( U(A) \bigr)_\partial \sqsubseteq^* \bigl( U(A) \sqcap A \bigr)_\partial \sqsubseteq (A)_\partial = A.
  \end{gather*}
  By definition of $U(A)$, we obtain $\bigl( U(A) \bigr)_\partial \sqsubseteq U(A)$.
  Equality ensues, and $U(A)$ is $1$-Lipschitz.
\end{proof}

Foregoing the grey terminology, the above proposition says that, if $(X,\tau,\partial)$ is a Polish topometric space such that $(U)_{\partial<\varepsilon}$ is open for any open $U$ and any $\varepsilon >0$, and $A \subseteq X$ is co-meagre in some open $U \subseteq X$, then $(A)_{\partial<r}$ is co-meagre in $(U)_{\partial<r}$ for all $r$.
This fact is crucial for the proof of the topometric generalisation of Effros' theorem presented at the end of this article, and we do not know how to prove it without using the machinery of grey sets, in particular the grey version of the Kuratowski--Ulam theorem.

\section{Grey subsets of completely metrisable groups}

Throughout this section, let $G$ denote a completely metrisable topological group. We now introduce two operations on grey subsets; the operation $*$ reminds one of convolution.

\begin{dfn}
  \label{dfn:GreyStar}
  For two grey subsets $A,B \sqsubseteq G$ we define $A^{-1}, A * B \sqsubseteq G$ by
  \begin{gather*}
    A^{-1}(x) = A(x^{-1}), \qquad
    A * B(x) = \inf_{yz = x} \bigl( A(y) + B(z) \bigr).
  \end{gather*}
\end{dfn}
Note that $*$ is associative and has $\bzero_{\{1_G\}}$ as a neutral element.
We observe that for $A,B \subseteq G$, $\bzero_A^{-1} = \bzero_{A^{-1}}$ and $\bzero_A * \bzero_B = \bzero_{A\cdot B}$.
Thus, ${}^{-1}$ and $*$ extend the group operations of $G$, applied to subsets, to grey subsets (and should be thought of as operations on subsets, rather than as operations on group elements).
As expected, we have, for all grey subsets $A,B$ of $G$, that
\begin{gather*}
  \left(A * B \right)^{-1} = B^{-1} * A^{-1}.
\end{gather*}
By extension, for $x \in G$ we define $xA = \bzero_{\{x\}} * A$, namely $xA(y) = A(x^{-1}y)$, so $x \bzero_A = \bzero_{x\cdot A}$, and similarly for $Ax$.
We then obtain
\begin{gather}
  \label{eq:GreyStar}
  A * B = \bigsqcup_x \bigl( A(x) + xB \bigr) = \bigsqcup_x \bigl( Ax + B(x) \bigr).
\end{gather}
\begin{lem}
  \label{lem:GreyStarOpen}
  If $A\sqsubseteq_o G$ and $B \sqsubseteq G$ then $A^{-1} \sqsubseteq_o G$, $A * B \sqsubseteq_o G$ and $B * A \sqsubseteq_o G$.
\end{lem}
\begin{proof}
  The first assertion is obvious, the others are by \autoref{eq:GreyStar} above.
\end{proof}

\begin{prp}[Pettis Theorem for grey subsets]
  \label{prp:Pettis}
  Let $A,B \sqsubseteq G$ be grey subsets.
  Then $U(A) * U(B) \sqsubseteq A * B$.
\end{prp}
\begin{proof}
  We first observe that $U(A^{-1}) = U(A)^{-1}$, $U(xA) = xU(A)$, $U(Ax) = U(A)x$, by continuity of the group operations.
  Let $x \in G$, and let $D_x(y) = U(A)(y) + U(B)(y^{-1}x)$.
  Then $D_x$ is open, and $D_x \sqsubseteq^* A + xB^{-1}$.
  It follows that $\inf D_x \geq \inf (A + xB^{-1})$, i.e., $U(A) * U(B)(x) \geq A * B(x)$, as desired.
\end{proof}

\begin{lem}
  \label{lem:NonmeagreBaireMeasruableGreySubset}
  Let $A \sqsubseteq G$ be a non meagre Baire-measurable grey subset.
  Then $(A * A^{-1})^\circ(1) = 0$.
\end{lem}
\begin{proof}
  Let $B = U(A)$.
  Since $A$ is Baire-measurable and non meagre, we have $\inf B = 0$, thus $B * B^{-1}(1) = 2\inf B = 0$.
  By \autoref{prp:Pettis} we have $B * B^{-1} \sqsubseteq A * A^{-1}$, and since $B * B^{-1} \sqsubseteq_o G$, we obtain $B * B^{-1} \sqsubseteq (A * A^{-1})^\circ$.
  Thus $(A * A^{-1})^\circ(1) = 0$.
\end{proof}

The following is just a translation of the definition of a subgroup as a set to grey sets.

\begin{dfn}
  A \emph{grey subgroup} of $G$ is a grey subset $H \sqsubseteq G$ satisfying the following properties:
  \begin{itemize}
  \item $\inf H = 0$,
  \item $H * H^{-1} \sqsubseteq H$, i.e., $H(x) + H(y) \geq H(x y^{-1})$.
  \end{itemize}
\end{dfn}

Assume that $H$ is a grey subgroup.
It follows that $H(1) \le \inf_{x y = 1} H(x) + H(y^{-1}) = 2 \inf H = 0$.
Applying this to the second property we get $H^{-1} = 1 * H^{-1} \sqsubseteq H * H^{-1} \sqsubseteq H$, and therefore $H^{-1} = H = H * H$.
Finally, if $H \sqsubseteq G$ is a grey subgroup then so is $\overline H$, since then $\inf \overline H \leq \inf H  = 0$ and
\begin{gather*}
  \overline H(x) + \overline H(y) = \liminf_{x'\to x, y' \to y} H(x') + H(y') \geq \liminf_{x'\to x, y' \to y} H(x'y'^{-1}) = \overline H(xy^{-1}).
\end{gather*}
The same argument with $\limsup$ works for $H^\circ$, with the caveat that $\inf H^\circ$ need not be zero.

\begin{lem}
  \label{lem:OpenGreySubgroup}
  Let $H \sqsubseteq G$ be a grey subgroup.
  Then the following are equivalent:
  \begin{enumerate}
  \item $\inf H^\circ = 0$ (equivalently, $H^\circ$ is a grey subgroup).
  \item $H$ is open in $G$.
  \item $H$ is clopen in $G$.
  \end{enumerate}
\end{lem}
\begin{proof}
  \begin{cycprf}
  \item[\impnext]
    If $H^\circ$ is a grey subgroup then $H^\circ(1) = 0$.
    It follows that $H = H * H \sqsupseteq H * H^\circ \sqsupseteq H + 0 = H$, so $H = H * H^\circ$ is open.
  \item[\impnext]
    Assume $H(x) > r$ for some $x \in G$ and $r \in \bR$.
    Then $U = H_{<H(x)-r}$ is open, and if $z \in U$ then $H(xz) \geq H(x) - H(z) > r$, so $x \in xU \subseteq H_{>r}$.
    Thus $H$ is closed as well and therefore clopen.
  \item[\impfirst]
    Since $H = H^\circ$.
  \end{cycprf}
\end{proof}

\begin{rmk}
  \label{rmk:SemiNormDistance}
  What we call a grey subgroup of $G$ is usually called a \emph{semi-norm} on $G$.
  For a grey subgroup $H$ and a left-invariant pseudo-metric $d$ on $G$ (where we allow infinite distance) define
  \begin{gather*}
    d_H(g,h) = H(g^{-1}h), \qquad H_d(g) = d(1,g).
  \end{gather*}
  Then $d \mapsto H_d$ and $H \mapsto d_H$ are inverses, yielding a natural bijection between grey subgroups and left-invariant pseudo-metrics on $G$.

  Notice also that $H \sqsubseteq G$ is closed (open) if and only if $d_H$ is, and more generally, $\overline{d_H} = d_{\overline H}$, and if $H^\circ$ is a grey subgroup then $d_H^\circ = d_{H^\circ}$.
\end{rmk}

\begin{lem}
  \label{lem:NonmeagreBaireMeasruableGreySubgroup}
  Let $H \sqsubseteq G$ be a non meagre Baire-measurable grey subgroup.
  Then $H$ is clopen.
\end{lem}
\begin{proof}
  By \autoref{lem:NonmeagreBaireMeasruableGreySubset}, $(H * H^{-1})^\circ(1) = 0$, i.e., $H^\circ(1) = 0$, so $H$ is clopen by \autoref{lem:OpenGreySubgroup}.
\end{proof}

In usual terminology, this lemma says that if a semi-norm $H$ on a completely metrisable topological group $G$ is Baire-measurable and for any $\varepsilon >0$ the set of $g$ such that $H(g) \le \varepsilon$ is non meagre, then $H$ must be continuous.

\section{The small density property}
\label{sec:SmallDensity}

\begin{dfn}
  Let $G$ be a Polish group, $H \sqsubseteq G$ a grey subgroup.
  Let $d_H$ be the corresponding left-invariant pseudo-metric, as per \autoref{rmk:SemiNormDistance}.
  Then we define the \emph{index} $[G:H]$ to be the density character of $d_H$, namely the least cardinal of a $d_H$-dense subset of $G$.
\end{dfn}

We observe that if $H$ is an ordinary subgroup then this agrees with the usual definition of index.
Here we are going to be interested in the condition $[G:H] < 2^{\aleph_0}$ (i.e., ``$H$ has small index'').
Since the cofinality of $2^{\aleph_0}$ is uncountable, the following are equivalent:
\begin{enumerate}
\item $[G:H] < 2^{\aleph_0}$.
\item For all $\varepsilon > 0$, $G$ can be covered by fewer than continuum many left translates of $H_{<\varepsilon}$.
\item For all $\varepsilon > 0$, a family of disjoint left translates of $H_{<\varepsilon}$ has cardinal smaller than the continuum.
\end{enumerate}

\begin{prp}
  \label{prp:AutomaticContinuityBaireSmallIndex}
  Let $G$ be a completely metrisable group, and $H$ be a Baire-measurable grey subgroup of $G$.
  Assume also that $[G:H]< 2^{\aleph_0}$.
  Then $H$ is clopen.
\end{prp}
\begin{proof}
  If $G$ is not a perfect topological space then it is discrete and there is nothing to prove, so we assume that $G$ has no isolated points.
  In view of \autoref{lem:NonmeagreBaireMeasruableGreySubgroup}, it is enough to show that our assumptions imply that $H$ is not meagre.
  Assume for a contradiction that there exists $r>0$ such that $H_{<r}$ is meagre.
  Since the map $(x,y) \mapsto x^{-1} y$ is surjective and open from $G^2$ to $G$, we obtain that $\{(x,y) : H(x^{-1}y) < r \}$ is meagre in $G^2$.
  Then the Kuratowski--Mycielski Theorem implies that there exists a Cantor set $K \subseteq G$ such that, for any $x \neq y \in K$, one has $H(x^{-1}y) \ge r$ (see \cite{Kuratowski:IndependentSets} for the general version of this theorem that we use here, which is valid in any completely metrisable perfect space).
  This contradicts our assumption on the index of $H$, and we are done.
\end{proof}

Recall from \cite{BenYaacov-Berenstein-Melleray:TopometricGroups} that a \emph{Polish topometric group} is a triplet $(G,\tau,\partial)$, where $(G,\tau)$ is a Polish group, $\partial$ is a bi-invariant metric refining $\tau$ and $\tau$-lower semi-continuous.
The canonical example one should have in mind when thinking of this is the isometry group of some Polish metric space $(X,d)$ (or, more generally, the automorphism group of some Polish metric structure), endowed with the topology of pointwise convergence and the supremum metric $\partial(g,h)= \sup \{d(gx,hx) : x \in X \}$.
Note that any Polish topometric group $(G,\tau,\partial)$ is adequate as a Polish topometric space.
Indeed, if $U \subseteq G$ is an open set then
\begin{gather*}
  (U)_{\partial<r}= \bigcup_{\partial(g,1) < r} gU
\end{gather*}
is open as well.

We also need the following fact.

\begin{prp}
  Let $(G,\tau,\partial)$ be a Polish topometric group.
  Then $\partial$ is a complete metric on $G$.
\end{prp}
\begin{proof}
  Denote by $\partial_u$ a metric generating the coarsest bi-invariant uniformity refining $\tau$; it is well-known that $\partial_u$ is complete whenever $G$ is a Polish group.
  Since the uniformity generated by $\partial$ must be finer than the one generated by $\partial_u$, we may assume without loss of generality that $\partial \ge \partial_u$.
  Let $(g_n)$ be a $\partial$-Cauchy sequence in $G$.
  Then $(g_n)$ is $\partial_u$-Cauchy in $G$, hence it must $\partial_u$-converge to some $g \in G$.
  Pick $\varepsilon >0$, and let $N$ be such that for any $n,m \ge N$ one has $\partial(g_n,g_m) \le \varepsilon$.
  Fixing $n$ and letting $m$ go to $+ \infty$ we obtain, since $\partial$ is $\tau$-lower semicontinuous, that $\partial(g_n,g) \le \varepsilon$ for all $n \ge N$.
\end{proof}

\begin{dfn}
  We say that a Polish topometric group $(G,\tau,\partial)$ has the \emph{small density property} if whenever $H \sqsubseteq G$ is a $\partial$-Baire-measurable grey subgroup of index $< 2^{\aleph_0}$ then $H$ is open.
\end{dfn}

\begin{rmk}
  We do not call this property the small index property, because even when $\partial$ is the discrete metric the small density property as defined above is stronger than the usual small index property (which corresponds to left-invariant \emph{ultra-metrics} rather than left-invariant metrics).
\end{rmk}

We recall that a Polish topometric group $(G,\tau,\partial)$ admits ample generics if, for any integer $n$, there exists $\bar g \in G^n$ such that $\bigl(\{ k\bar g k^{-1} \colon k \in G\}\bigr)_\partial$ is co-meagre in $G^n$ (see \cite{BenYaacov-Berenstein-Melleray:TopometricGroups} for a discussion of this definition).

\begin{thm}
  \label{thm:SmallDensity}
  Let $(G,\tau,\partial)$ be a Polish topometric group admitting ample generics.
  Then $G$ has the small density property.
\end{thm}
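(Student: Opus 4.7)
My plan is to reduce to \fref{lem:NonMeagreBaireMeasruableGradedSubgroup}: it suffices to verify that $H$ is $\tau$-Baire measurable and non-meagre as a graded subset. Once both hold, that lemma directly yields $H$ is $\tau$-open. The role of ample generics will be to bridge the gap between the $\partial$-regularity given by hypothesis and the $\tau$-regularity one wants.

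For non-meagreness, I would adapt the Kechris--Rosendal pigeonhole argument for the classical small index property to the topometric setting. Fix $\varepsilon > 0$; by the uncountable cofinality of $2^{\aleph_0}$, small density yields $G = \bigcup_{\alpha < \kappa} k_\alpha H_{<\varepsilon}$ with $\kappa < 2^{\aleph_0}$. Choose an ample generic $\bar g \in G^n$, so that the conjugation map $\Phi\colon k \mapsto k\bar g k^{-1}$ is $\tau$-continuous and has image $\partial$-dense in a $\tau$-comeagre subset of $G^n$. By bi-invariance of $\partial$, $\Phi$ sends each coset $k_\alpha H_{<\varepsilon}$ into a $\partial$-ball around $\Phi(k_\alpha)$ of radius $\sup_{h \in H_{<\varepsilon}} \psi_{\bar g}(h)$, where $\psi_{\bar g}(h) = \partial(\bar g, h\bar g h^{-1})$ is itself a $\tau$-lower semi-continuous graded subgroup (it is subadditive and symmetric, and is a composition of the $\tau$-continuous map $h \mapsto (\bar g, h\bar g h^{-1})$ with the $\tau$-l.s.c. metric $\partial$). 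A cardinality argument combined with the graded Kuratowski--Ulam theorem applied to the graded subset $(k,h) \mapsto H(h) + \psi_{\bar g}(h)$ should then force at least one of the image clumps to be $\tau$-non-meagre in $G^n$, and pulling this back along $\Phi$ yields that $H_{<\varepsilon'}$ is $\tau$-non-meagre for some $\varepsilon'$ comparable with $\varepsilon$.

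For $\tau$-Baire measurability, the natural plan is to compare $H$ to the auxiliary $\tau$-l.s.c. seminorms $\psi_{\bar g_n}$ arising from ample generic tuples of increasing arity. Each such $\psi_{\bar g_n}$ is $\tau$-Borel, and the fact that generics densely sample the $\partial$-topology via conjugation should let one express $H$, up to a $\tau$-meagre set, in terms of suitable suprema and infima of the $\psi_{\bar g_n}$. Because the class of Baire measurable graded subsets is closed under countable $\inf$, this would give $\tau$-Baire measurability of $H$.

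I expect this second step to be the main obstacle. Non-meagreness is essentially a cardinality/pigeonhole phenomenon paralleling the classical argument, whereas $\tau$-Baire measurability requires a genuinely new argument: a $\partial$-closed set in $G$ has no reason to be $\tau$-Borel in general, since $(G,\partial)$ is typically non-separable. Only the richness of ample generics (producing many $\tau$-continuous conjugation maps whose images $\partial$-densely fill comeagre regions) can force the $\partial$-structure of $H$ to be visible to $\tau$, and the delicate point will be to formalise this via the graded descriptive-set-theoretic machinery developed in Sections 2--3.
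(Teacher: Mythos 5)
There is a genuine gap, and it sits exactly where you predict: the reduction to \fref{lem:NonMeagreBaireMeasruableGradedSubgroup} requires $\tau$-Baire measurability of $H$, and your sketch for that step is a hope rather than an argument. The paper's proof never establishes Baire measurability of $H$ directly. Instead it works with the $\tau$-closure $\overline H$, which is lower semi-continuous and hence automatically Borel: one shows (i) $H$ is non-meagre, which makes $\overline H$ closed and non-meagre, hence clopen by \fref{lem:OpenGradedSubgroup}; (ii) $H \leq^* \overline H$, so that $U(H) = U(\overline H) = \overline H$; and then (iii) the graded Pettis theorem (\fref{prp:Pettis}) gives $\overline H = U(H)\diamond U(H) \geq H \diamond H = H$, i.e.\ $H = \overline H$ everywhere, not merely modulo a meagre set. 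Step (ii) is the heart of the matter, and it is where both the $\partial$-closedness of $H$ and ample generics enter: $\partial$-lower semi-continuity gives $H = \sup_n H^{n\partial}$ with $H^{n\partial} = H \diamond n\partial$, while ample generics (via Lemma~3.4 of \cite{BenYaacov-Berenstein-Melleray:TopometricGroups}, the small index hypothesis, and the classical Pettis theorem) show that the $\partial$-fattening $(H_{<\varepsilon})_\varepsilon$ contains a neighbourhood of $1$, i.e.\ $(H^{n\partial})^\circ(1)=0$; from this one deduces $H^{n\partial} < \overline H + \varepsilon$ on a dense open set for every $\varepsilon$, whence $H \leq^* \overline H$. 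Your proposal never uses the $\partial$-lower semi-continuity of $H$ in any concrete way, yet the theorem fails without it, so any correct argument must exploit it somewhere; the $H^{n\partial}$ device is how the paper does so.

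Two smaller points. First, non-meagreness of $H$ does not need ample generics at all: if $H_{<\varepsilon}$ were meagre, the Kuratowski--Mycielski theorem applied to the meagre relation $g^{-1}h \in H_{<\varepsilon}$ produces a perfect set that is $\varepsilon$-separated for $d_H$, contradicting $[G:H] < 2^{\aleph_0}$. Second, your pigeonhole argument as stated does not go through: the image of the coset $k_\alpha H_{<\varepsilon}$ under $k \mapsto k\bar g k^{-1}$ lies in a $\partial$-ball of radius $\sup_{h \in H_{<\varepsilon}} \partial(\bar g, h\bar g h^{-1})$, but there is no a priori control on this supremum --- relating $H_{<\varepsilon}$ to $\partial$ and to $\tau$ is precisely what is to be proved. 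Where ample generics are genuinely used is in the opposite direction: to show that $G \setminus (H_{<\varepsilon})_\varepsilon$ cannot be non-meagre in every open set, since otherwise one could $\partial$-approximate conjugates of $H_{<\varepsilon/3}$ by conjugates of that complement along a perfect set of conjugators, producing a $d_H$-separated perfect set and again contradicting small index.
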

\begin{proof}
  Let $H \sqsubseteq G$ be a $\partial$-Baire-measurable grey subgroup of small index, and let us show that $H$ is clopen.
  By \autoref{prp:AutomaticContinuityBaireSmallIndex}, $H$ is $\partial$-clopen.

  We recall that
  \begin{gather*}
    (H)_\partial(x) = \inf_y \bigl( H(y) + \partial(x,y) \bigr) = H * \partial = \partial * H,
  \end{gather*}
  where for $H * \partial$ we identify $\partial$ with the corresponding norm $\partial(1,\cdot)$.
  Accordingly, we define $(H)_{n\partial}$ with $n\partial$ in place of $\partial$, observing that $(H)_{n\partial}$ is also a grey subgroup, and $(H)_{n\partial} \sqsupseteq H$, whereby $H * (H)_{n\partial} = (H)_{n\partial}$.
  Also, $\bigsqcap_n (H)_{n\partial} = \overline H^\partial = H$.

  By the Kuratowski--Mycielski Theorem, $H$ cannot be meagre, that is, $H_{<\varepsilon}$ is not meagre for any $\varepsilon > 0$.
  Assume, for a contradiction, that for some $\varepsilon > 0$ the set $B = G \setminus \bigl[ (H)_\partial \bigr]_{<\varepsilon}$ is non meagre in every open subset of $G$, and let $A = H_{<\varepsilon/4}$.
  Below, for $x \in G$ and $C \subseteq G$ we denote by $C^x$ the set $x^{-1} C x$.
  By \cite[Lemma~3.6]{BenYaacov-Berenstein-Melleray:TopometricGroups} we can find a mapping $a \in 2^\omega \mapsto h_a \in G$ such that if $a,b \in 2^\omega$ are distinct then $\partial\bigl( A^{h_a},B^{h_b} \bigr) < \varepsilon/4$, i.e., $\partial\bigl( A^{h_ah_b^{-1}},B \bigr) < \varepsilon/4$.
  It follows that $H(h_ah_b^{-1}) \geq \varepsilon/4$ for all $a \neq b$, so $[G:H] = 2^{\aleph_0}$, a contradiction.

  Thus, for all $\varepsilon > 0$, the set $\bigl[ (H)_\partial \bigr]_{<\varepsilon}$ is co-meagre in some non empty open set, so $\inf U\bigl( (H)_\partial \bigr) = 0$.
  By Pettis' Theorem, we have $U\bigl( (H)_\partial \bigr) * U\bigl( (H)_\partial \bigr) \sqsubseteq (H)_\partial * (H)_\partial = (H)_\partial$, so $\inf (H)_\partial^\circ = 0$ and $(H)_\partial$ is a clopen grey subgroup.
  Since $(H)_\partial \sqsupseteq (H)_{n\partial} \sqsupseteq n(H)_\partial$, the latter implies $(H)_{n\partial}^\circ(1) = 0$ for all $n$, so $(H)_{n\partial}$ is a clopen grey subgroup for all $n$.
  Since $H = \bigsqcap_n (H)_{n\partial}$, $H$ is closed.
  Since $H$ is also non meagre, $\inf H^\circ = 0$, so $H$ is clopen.
\end{proof}

Notice that \cite[Theorem~4.7]{BenYaacov-Berenstein-Melleray:TopometricGroups} follows from \autoref{thm:SmallDensity} via a straightforward reduction to the case where the target group is metrisable, giving a more elegant proof than the one appearing there.
Conversely, \autoref{thm:SmallDensity} also follow from a combination of \autoref{prp:AutomaticContinuityBaireSmallIndex} with \cite[Theorem~4.7]{BenYaacov-Berenstein-Melleray:TopometricGroups}.

%It may also be worthwhile to translate this result in the more common language of left-invariant metrics.
%Assume that $(G,\tau,\partial)$ is a Polish topometric group with ample generics, and let $\rho$ be a left-invariant pseudo-metric on $G$.
%Let $G_{\rho}$ denote the metric space obtained by identifying points $g,g'$ such that $\rho(g,g')=0$, and assume that the density character of $G_{\rho}$ is strictly less than the continuum.
%The theorem above says that, under these assumptions, $\rho$ must be continuous (with regard to the topology $\tau$ of $G$) as soon as each set
%$\{g : \rho(g,1) \le r\}$ is $\partial$-closed. Even without this assumption on the level sets of $\rho$, one can obtain some information: indeed, the function $\rho'$ defined by
%\[
%\rho'(g,h)= \liminf_{g' \xrightarrow[\partial]{} g, h' \xrightarrow[\partial]{} h} \rho(g',h')
%\]
%is still a left-invariant pseudo metric on $G$, and the density character of the associated metric space is still less than the continuum. The $\liminf$ ensures that the associated grey subgroup is $\partial$-closed, so our theorem says that $\rho'$ must be continuous with respect to the topology $\tau$.

\section{A remark concerning grey subgroups of automorphism groups}

We conclude our discussion of grey subgroups with a natural situation in which open grey subgroups arise.
We assume that the reader of this section is familiar with the formalism of continuous logic (\cite{BenYaacov-Usvyatsov:CFO,BenYaacov-Berenstein-Henson-Usvyatsov:NewtonMS}).

Let $\cM$ be a metric structure, $G = \Aut(\cM)$.
If $\cM$ is a classical structure then for every member $a \in M^{eq}$, the stabiliser $G_a$ is an open subgroup, and if $\cM$ is $\aleph_0$-categorical then every open subgroup is the stabiliser of some (real or imaginary) element.
In the metric case, on the other hand, the stabiliser $G_a$ is usually \emph{not} open in $G$, and again we encounter the need to consider grey subgroups.

\begin{dfn}
  Let $\cM$ be a metric structure, $G = \Aut(\cM)$.
  The \emph{(grey) stabiliser} of $a \in M$, still denoted $G_a$, is defined by $G_a(g) = d(a,ga)$.
\end{dfn}

The grey stabiliser is, by definition, an open grey subgroup, and the exact stabiliser is $G_{a,\leq 0}$.
As in classical logic (e.g., \cite[Theorem~1.6]{Ahlbrandt-Ziegler:QuasiFinitelyAxiomatisable}), the converse holds for $\aleph_0$-categorical structures.

\begin{prp}
  Let $\cM$ be an $\aleph_0$-categorical separable structure, $G = \Aut(\cM)$, and let $H \sqsubseteq_o G$ be a real-valued open grey subgroup (namely, we exclude $+\infty$ from the range).
  Then there exists an imaginary $a \in M^{eq}$ such that $H = G_a$.
\end{prp}
\begin{proof}
  We may consider $M^\bN$ as a sort, equipped with the distance
  \begin{gather*}
    d(b,c) = \bigvee_n 2^{-n} \wedge d(b_n,c_n).
  \end{gather*}
  We fix $a$ in this sort which enumerates a dense subset of $\cM$.
  By homogeneity, the set of realisations of $p = \tp(a)$ in $\cM$ is exactly $\overline{Ga}$.
  We observe that $\{G_{a,<\varepsilon}\}_{\varepsilon > 0}$ is a base of neighbourhoods of the identity, so for every $\varepsilon > 0$ there exists $\delta(\varepsilon) > 0$ such that $G_{a,<\delta(\varepsilon)} < H_{<\varepsilon}$.

  Let $\varepsilon > 0$ and $g,g',h,h' \in G$, and assume that $d(ga,g'a)$ and $d(ha,h'a)$ are smaller than $\delta = \delta(\varepsilon/2)$.
  Then $g^{-1}g', h^{-1}h' \in G_{a,<\delta} \subseteq H_{<\varepsilon/2}$, whereby $|d_H(g,h) - d_H(g',h')| < \varepsilon$.
  Thus the map $\varphi\colon Ga \times Ga \to \bR$ sending $(ga,ha) \mapsto d_H(g,h)$ is well defined and uniformly continuous, and thus extends uniquely to a uniformly continuous function $\varphi\colon \overline{Ga} \times \overline{Ga} \to [0,\infty]$.
  By the Ryll-Nardzewski Theorem \cite[Fact~1.14]{BenYaacov-Usvyatsov:dFiniteness}, since $\varphi$ is uniformly continuous and invariant under automorphism, it is a definable pseudo-metric on the set defined by $p$ (and therefore in particular bounded).
  By \cite{BenYaacov:DefinabilityOfGroups}, and since by $\aleph_0$-categoricity the set defined by $p$ is definable, $\varphi$ extends to a definable pseudo-metric on all of $M^\bN$.
  (To recall the argument, by the Tietze extension theorem we may extend $\varphi$ to something definable on all of $M^\bN \times M^\bN$, call it $\varphi_0(x,y)$, and then  $\varphi_1(x,y) = \sup_{z \vDash p} |\varphi_0(x,z) - \varphi_0(y,z)|$ is a definable pseudo-metric which agrees with $\varphi$ on $p$.)
  Finally, with $[b]$ denoting the canonical parameter for $\varphi(x,b)$,
  \begin{gather*}
    H(g) = d_H(g,1) = \varphi(ga,a) = d\bigl( [a], [ga] \bigr).
  \end{gather*}
  Thus $H$ is precisely the grey stabiliser of $[a]$.
\end{proof}

\section{Topometric versions of two classical theorems}

A basic but important tool when studying Polish group actions is Effros' theorem \cite{Effros:TransformationGroups}: whenever $G$ is a Polish group acting continuously on a Polish space $X$, an element $x \in X$ has a co-meagre orbit if and only if $G  x$ is dense and the map $g \mapsto g  x$ is an open map from $G$ to $G  x$.
Combined with a theorem of Hausdorff, this implies immediately that $G  x$ must then be a dense $G_{\delta}$.
In particular, whenever a Polish group has a co-meagre conjugacy class, this conjugacy class is a dense $G_{\delta}$.
It is of interest to extend these results of Effros and Hausdorff to the topometric context, which we manage to do below, under the assumption that our topometric spaces are adequate.
In the proof of \autoref{thm:EffrosTopometric}, the attentive reader will notice the crucial use of \autoref{prp:CoMeagreEnlargement}, which requires the use of grey sets.
Even though this proposition seems to be used in a trivial case, it appears (insofar as we can see) indispensable for the proof.

Let us first recall the notion of a strong Choquet game as well as some facts regarding it.
While Kechris \cite{Kechris:Classical} defined the strong Choquet game for a separable metrisable space $A$ working entirely inside $A$ (denoted $X$ there), for our purposes it will be convenient to embed $A$ is some Polish space $X$.
Then the strong Choquet game $G_A$ is as an infinite game where two players $I$ and $I\!I$ take turns to play.
At step $i$:
\begin{itemize}
\item Player $I$ plays an open set $U_i$ contained in $V_{i-1}$ (with $V_{-1}=X$) and a point $x_i \in U_i \cap A$.
\item Player $I\!I$ plays an open set $V_i$ containing $x_i$ and contained in $U_i$.
\end{itemize}
Player $I\!I$ wins the game if $\bigcap U_i$ (which is equal to $\bigcap V_i$) intersects $A$.
By \cite[Theorem~8.17]{Kechris:Classical}, Player $I\!I$ has a winning strategy in $G_A$ if and only if $A$ is $G_\delta$ in $X$ (i.e., $A$ is Polish).

When $A \subseteq B \subseteq X$ define a game $G_{A,B}$ which follows the same rules, only that Player $I\!I$ wins if $\bigcap U_i$ intersects $B$.
Following the proof of \cite[Theorem~8.17]{Kechris:Classical}, Player $I\!I$ has a winning strategy in $G_{A,B}$ if and only if there exists a $G_\delta$ set $C$ such that $A \subseteq C \subseteq B$.

\begin{thm}[Topometric Hausdorff Theorem]
  \label{thm:Hausdorff}
  Let $Y$ be a Polish space, $(X,\tau,\partial)$ be a Polish topometric space, and $F \colon Y \to X$ be a continuous map.
  Assume moreover that $F\colon Y \rightarrow C$ is topometrically open for some $F(Y) \subseteq C \subseteq \overline{F(Y)}^\partial$.
  Then there exists $C \subseteq D \subseteq \overline{F(Y)}^{\partial}$ which is $G_{\delta}$ in $X$.

  In particular, under our hypotheses $\overline{F(Y)}^{\partial}$ is co-meagre in $\overline{F(Y)}^{\tau}$, and if $F\colon Y \rightarrow \overline{F(Y)}^\partial$ is topometrically open then $\overline{F(Y)}^\partial$ is $G_\delta$ in $X$.
\end{thm}
\begin{proof}
  It will suffice to prove that for every $\varepsilon > 0$, $I\!I$ has a winning strategy in $G_{C,B}$ where $B = \bigl( F(Y) \bigr)_{\partial\le\varepsilon}$.
  We assume without loss of generality that $I$ only plays sequences of open sets with vanishing diameter with respect to some complete compatible distance on $X$ such that $\overline U_i \subseteq V_{i-1}$.

  The strategy will produce a sequence of open subsets $W_i \subseteq Y$ and $I\!I$ will always play open sets $V_i \subseteq U_i$ such that $C \cap V_i = C \cap U_i \cap \bigl( F(W_i) \bigr)_{\partial<\varepsilon}$ (and $W_{-1} = Y$).
  Assume we are at turn $i$ of the game, and $I$ has just played $(U_i,x_i)$, so in particular $x_i \in V_{i-1} \subseteq \bigl( F(W_{i-1}) \bigr)_{\partial<\varepsilon}$.
  Pick $y_i \in W_{i-1}$ such that $\partial\bigl( F(y_i), x_i \bigr) < \varepsilon$, and choose $W_i$ open, containing $y_i$ and such that $\overline W_i \subseteq W_{i-1}$, arranging that $W_i$ have vanishing diameter with respect to some complete compatible distance on $Y$.

  Then $y_i \rightarrow y$ and $\bigcap W_i = \{y\}$ in $Y$, while $x_i \rightarrow x$ and $\bigcap U_i = \{x\}$ in $X$.
  Since $\partial$ is $\tau$-lower semicontinuous we have $\partial\bigl( F(y), x \bigr) \le \varepsilon$, so $x \in B$ and we are done.
\end{proof}

\begin{thm}[Topometric Effros Theorem]
  \label{thm:EffrosTopometric}
  Let $(X,\tau,\partial)$ be an adequate Polish topometric space and $G$ a Polish group acting continuously on $X$ by $\tau$-homeomorphisms and $\partial$-isometries.
  Assume that $x \in X$ is such that $G  x$ is dense.
  Then the following conditions are equivalent:
  \begin{enumerate}
  \item
    $\overline{G  x}^{\partial}$ is $G_{\delta}$.
  \item
    $\overline{G  x}^{\partial}$ is co-meagre.
  \item
    The map $G \rightarrow \overline{G  x}^\partial$, $g \mapsto gx$, is topometrically open.
    For any open subset $U$ of $G$ and any $r >0$, $(U  x)_{<r}$ is open in $\overline{G  x}^{\partial}$.
  \item
    There exists $y \in \overline{G  x}^{\partial}$ such that the map $G \rightarrow G  y$, $g \mapsto gy$, is topometrically open.
  \end{enumerate}
\end{thm}
\begin{proof}
  \begin{cycprf}
  \item Since $G  x$ is dense.
  \item
    Let $\pi \colon G \to X$ send $g \mapsto gx$.
    Fix a countable basis $(O_n)_{n< \omega}$ for the topology of $G$.
    Recall that a continuous image of a Borel subset of a Polish space is analytic and therefore Baire-measurable.
    Therefore, for any $n$ the function $(\pi O_n)_\partial$ is Baire-measurable and $1$-Lipschitz (relative to $\partial$).
    By \autoref{prp:CoMeagreEnlargement}, $U_n=U\bigl( (\pi O_n)_\partial \bigr)$ is also $1$-Lipschitz.
    Let $\Omega= \{y \colon \forall n (\pi O_n)_\partial(y)=U_n(y)\}$. This is a $\tau$-co-meagre, $\partial$-closed subset. Also, for any $O \sqsubseteq_o G$, $(\pi O)_{\partial} \sqcap \bzero_\Omega \sqsubseteq_o \bzero_\Omega$.

    Now, let $B=\{y \colon \forall^* g \in G \ g  y \in \Omega\}$. This set is $G$-invariant, $\tau$-co-meagre, and $\partial$-closed. The first point is obvious, the second follows from the Kuratowski-Ulam theorem, and to see why the third holds assume that $b_i \in B$ and $b \in X$ are such that $\partial(b_i,b) \to 0$. Then there exists $g \in G$ such that $g  b_i \in \Omega$ for all $i$, so since $\Omega$ is $\partial$-closed we get $g  b \in \Omega$, i.e.~$b \in \Omega$.

    It follows that $\overline{ G  x}^\partial$ is contained in $B$; to conclude, it is enough to prove that for all $U \sqsubseteq_o G$ $(\pi U)_\partial \sqcap \bzero_B \sqsubseteq_o \bzero_B$.
    To that end, let $b_i \in B$ converge to $b \in B$; there exists $g \in G$ such that $g b \in \Omega$ and $g b_i \in \Omega$ for all $i$.

    Since $(\pi gU)_\partial \sqcap \bzero_\Omega \sqsubseteq_o \bzero_\Omega$, we have $\limsup (\pi gU)_\partial(g  b_i) \le (\pi gU)_\partial(gb)$, equivalently $\limsup (\pi U)_\partial (b_i) \le (\pi U)_\partial (b)$.
  \item[\impfirst] By \autoref{thm:Hausdorff}.
  \item[\impnum{3}{4}] Obvious.
  \item[\impnum{4}{2}] By \autoref{thm:Hausdorff}, with $C = G  y$, the set $\overline{G  y}^{\partial}$ is co-meagre in $\overline{G  y}^{\tau}$.
    Since $\overline{G  y}^{\partial}=\overline{G  x}^{\partial}$, we obtain that $\overline{G  x}^{\partial}$ is co-meagre in $X$.
  \end{cycprf}
\end{proof}

Since any Polish topometric group is adequate as a topometric space, we obtain that in any Polish topometric group with ample generics the set of metric generic elements is $G_{\delta}$ in $G^n$ for all $n$.
It is natural to wonder whether $(G  x)_\partial$ being co-meagre is also equivalent to $(U  x)_\partial$ being open in $(G  x)_\partial$ for all $U$ open in $G$.
While one implication follows immediately from the above result, further development of ``grey topology'' is probably necessary in order to prove (or refute) the other.

\bibliographystyle{begnac}
\bibliography{begnac}

\end{document}